\definecolor{lightgray}{rgb}{0.9, 0.9, 0.9}
\definecolor{darkgray}{rgb}{0.7, 0.7, 0.7}
\definecolor{darkblue}{rgb}{0, 0, .4}
\theoremstyle{plain}
\newtheorem{theorem}{Theorem}[section]
\newtheorem{corollary}[theorem]{Corollary}
\newtheorem{lemma}[theorem]{Lemma}
\newtheorem{proposition}[theorem]{Proposition}
\theoremstyle{definition}
\theoremstyle{remark}
\newcounter{todocounter}
\newfont{\footsc}{cmcsc10 at 8truept}
\newfont{\footbf}{cmbx10 at 8truept}
\newfont{\footrm}{cmr10 at 10truept}
\newcommand{\Av}{\operatorname{Av}}
\newcommand{\D}{\mathcal{D}}
\newcommand{\E}{\mathcal{E}}
\newcommand{\F}{\mathcal{F}}
\renewcommand{\P}{\mathcal{P}}
\date{}
\title{The enumeration of permutations avoiding 2143 and 4231}
\author{
M.~H. Albert\thanks{{\tt malbert@cs.otago.ac.nz}},
M.~D. Atkinson\thanks{{\tt mike@cs.otago.ac.nz}}\\
Department of Computer Science\\
University of Otago,  New Zealand\\
and\\
Robert Brignall\thanks{{\tt r.brignall@open.ac.uk}}\\
Department of Mathematics\\
The Open University, UK}
\begin{document}
\maketitle

\begin{abstract}We enumerate the pattern class $\Av(2143,4231)$ and completely describe its permutations.  The main tools are simple permutations and monotone grid classes.
\end{abstract}

\section{Introduction}

This paper is a contribution to an ongoing endeavour initiated by Knuth in section 2.2.1 of \cite{knuth:the-art-of-comp:1}: in Exercises 7--11 (and their solutions)   he enumerated the permutations that can be obtained from an input-restricted deque by first proving that they are exactly those that do not contain either of the patterns 4213 and 4231.  Since that time the subject of Permutation Patterns has developed into a rich combinatorial theory one of whose central concerns continues to be the description and enumeration of permutations that do not contain a stipulated set of patterns.

For completeness we briefly recall the salient definitions.  A permutation is simply an arrangement of the numbers $1,2,\ldots n$ for some $n>0$ (note therefore that all our permutations will be non-empty).  A permutation $\pi$ is said to be contained in (or be a subpermutation of) another permutation $\sigma$ if $\sigma$ has a subsequence whose terms are ordered in the same relative way as those of $\pi$.  For example, 3142 is contained in  1573462 because the subsequence 5362 is ordered in the same way as 3142.  If $\pi$ is not contained in $\sigma$ we say that $\sigma$ \emph{avoids} $\pi$.  The subpermutation relation is obviously a partial order on the set of all permutations and its down-sets are called {\em pattern classes}.  For every pattern class $\P$ there is a (possibly infinite) set of permutations that do not belong to $\P$ and are minimal with respect to not lying in $\P$.  This set, $B$ say, is called the \emph{basis} of $\P$ and it determines $\P$ as exactly that set of permutations that avoid every member of $B$: we write $\P=\Av(B)$.

The subpermutation order is invariant under the 8 symmetries generated by inversion, reversal and complementation.  These symmetries often allow arguments by case enumeration to be condensed.

If the set $B$ contains any permutation of length 1 or 2 then it is trivial to identify $\Av(B)$ and to enumerate it.  If $B$ contains 132 (or any of its 3 symmetries 213, 231, or 312) then $\Av(B)$ can be enumerated by the methods of \cite{albert:simple-permutat:}.  If $B$ is only known to contain 123 (or its symmetry 321) much less has been proved although many special cases have been solved (see, for example, \cite{albert:growth-rates-for-subclasses-of-Av(321):, atkinson:on-permutation-:, mansour:321-polygon-avo:, stankova:explicit-enumer:}).

Knuth's problem above  was the first one to be solved with $B$ having two permutations both of length 4.  There are in fact 56 essentially different such problems (i.e. problems in which the sets $B$ are inequivalent under symmetries).  It is known that they give rise to $38$ different enumerations \cite{bona:the-permutation:,kremer:permutations-wi:, kremer:postscript:-per:, kremer:finite-transiti:, le:wilf-classes-of:} (some inequivalent pairs are {\em Wilf-equivalent} meaning that they nevertheless have the same enumeration).  Of these $38$ Wilf classes around half have yet to be enumerated (see \cite{wiki:enumerations} which lists 18 published enumerations).

The ones that have been enumerated have been natural testing grounds for a succession of techniques (such as generating trees \cite{west:generating-tree1:}, the insertion encoding \cite{albert:the-insertion-e:}, the Schensted correspondence \cite{atkinson:permutations-wh:}, and simple permutations \cite{albert:simple-permutat:}).  In this paper we apply a combination of recently devised techniques to the pattern class $\Av(2143, 4231)$ and, not only do we enumerate it, we give a complete structural description of its members.

Our main technical tools are simple permutations and their inflations, together with  constrained decompositions of permutations into monotone subsequences and we now summarise the basic facts we shall need about these.  

A \emph{simple} permutation is one with no non-trivial intervals.  In this context an \emph{interval} of a permutation  is just a contiguous subsequence whose values form a consecutive set of integers.  For example 546 is an interval of 3154627.  If the interval is either a singleton or the entire permutation then it is trivial.  
Simple permutations are precisely those that do not arise from a non-trivial inflation, in the following sense.  Let $\sigma$ be any permutation of length $m$ and $\alpha_1,\alpha_2,\ldots,\alpha_m$ any sequence of permutations.  Then the \emph{inflation} of $\sigma$ by $\alpha_1,\alpha_2,\ldots,\alpha_m$, which we denote by  $\sigma[\alpha_1,\alpha_2,\ldots,\alpha_m]$, is that permutation of length $|\alpha_1|+\cdots+|\alpha_m|$ which decomposes into $m$ segments $\alpha'_1\alpha'_2\cdots\alpha'_n$ where each segment $\alpha'_i$ is an interval that is order isomorphic to $\alpha_i$, and the sequence $a_1a_2\cdots a_n$ formed by any (and hence every) choice of $a_i$ from $\alpha'_i$ is order isomorphic to $\sigma$.  For example the inflation of $3142$ by $21, 132, 1, 123$ is
\[3142[21, 132, 1, 123]=87\ 132\ 9\ 456\]

Permutations that are inflations of 12 and 21, which occur often in this paper, are said to be, respectively, {\em sum decomposable} and {\em skew decomposable}.  
The precise connection between simple permutations and inflations is furnished by a result from \cite{albert:simple-permutat:}.

\begin{proposition}
\label{CanonicalDecomposition}
Let $\pi$ be any permutation. Then there is a unique simple permutation $\sigma$ and permutations $\alpha_1,\ldots,\alpha_n$ such that
\[
\pi = \sigma [\alpha_1,\ldots,\alpha_n].
\]
If $\sigma \neq 12, 21$, then $\alpha_1,\ldots,\alpha_n$ are also uniquely
determined by $\pi$.  If $\pi = 12$ or $21$, then $\alpha_1,
\alpha_2$
are unique so long as we require that $\alpha_1$ is
sum indecomposable
or skew indecomposable respectively.
\end{proposition}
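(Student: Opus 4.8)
The plan is to base everything on a single structural observation about how intervals of $\pi$ interact, and then read off both existence and uniqueness from it. I would first prove an \emph{overlap lemma}: if $I$ and $J$ are intervals of $\pi$, viewed as sets of positions, that properly overlap (so $I\cap J\neq\emptyset$ while neither contains the other), then $I\cup J$, $I\cap J$ and $I\setminus J$ are again intervals. This is routine from the definition, because an interval is simultaneously a block of consecutive positions and a block of consecutive values, and a short check shows both properties survive the relevant set operations once the two blocks share a position (and hence also share a value).

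For existence I would argue by cases on the maximal proper intervals of $\pi$ (those not equal to all of $\{1,\ldots,n\}$); the case $|\pi|=1$ is trivial with $\sigma=\pi$. If some two maximal proper intervals $I,J$ overlap, the overlap lemma makes $I\cup J$ an interval strictly larger than $I$, so by maximality $I\cup J=\{1,\ldots,n\}$; writing $A=I\setminus J$, $B=I\cap J$, $C=J\setminus I$ then exhibits $\pi$ as a concatenation of three intervals that are monotone in both position and value, whence $\pi$ is sum or skew decomposable and $\sigma=12$ or $21$. If no two maximal proper intervals overlap, then they are pairwise disjoint (nesting is impossible by maximality) and, since every singleton is an interval, they partition $\{1,\ldots,n\}$ into blocks $\alpha'_1,\ldots,\alpha'_n$; contracting each block to a point gives a quotient $\sigma$ with $\pi=\sigma[\alpha_1,\ldots,\alpha_n]$. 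A nontrivial interval of $\sigma$ would pull back to a proper interval of $\pi$ strictly containing one of the blocks, contradicting maximality, so $\sigma$ has no nontrivial interval and is simple (with $\sigma=12$ or $21$ exactly when there are two blocks). Either way a decomposition of the required form exists.

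For uniqueness when $\sigma\neq 12,21$, the key consequence of the overlap lemma is that when $\sigma$ is simple of length at least $4$, every proper interval of $\pi$ lies inside a single block $\alpha'_i$: the set of blocks that an interval meets is forced to be an interval of $\sigma$, and simplicity leaves only the two trivial options, so a proper interval meets exactly one block and hence sits within it. Given a second decomposition $\pi=\tau[\beta_1,\ldots,\beta_k]$ with $\tau$ simple of length at least $4$, each $\beta'_j$ is then contained in some $\alpha'_i$ and, symmetrically, each $\alpha'_i$ in some $\beta'_j$; since the blocks in each system are disjoint, the two block systems coincide, forcing $n=k$, $\alpha_i=\beta_i$, and $\sigma=\tau$ on contraction. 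For the degenerate cases, $\pi=12[\alpha_1,\alpha_2]$ can be re-associated freely, but demanding that $\alpha_1$ be sum indecomposable pins it down as the first summand in the (unique) decomposition of $\pi$ into sum-indecomposable summands, after which $\alpha_2$ is the remainder; the skew case with $\sigma=21$ is identical by symmetry.

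The main obstacle I expect is not the overlap lemma itself but its two structural payoffs: verifying in the overlapping-intervals case that the three pieces $A,B,C$ really must be globally monotone rather than some mixed arrangement, and proving in the simple case that a proper interval cannot secretly straddle several blocks. Both require careful simultaneous bookkeeping of positions and values, and the argument must also quietly use the fact that no simple permutation of length $3$ exists, so that the simple-quotient branch never collapses to a hidden $12$ or $21$. Once these are in place, the decomposition and its uniqueness follow immediately.
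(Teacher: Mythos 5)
The paper itself offers no proof of this proposition: it is imported verbatim from \cite{albert:simple-permutat:}, so there is no in-paper argument to compare yours against. Judged on its own merits, your plan follows the standard route to this result and is essentially sound. The overlap lemma is correct as stated (bijectivity of $\pi$ is what makes the value set of $I\cap J$ equal $V_I\cap V_J$ and that of $I\setminus J$ equal $V_I\setminus V_J$), the trichotomy on maximal proper intervals does yield existence (in the overlapping case the three value blocks $V_A,V_B,V_C$ must lie on opposite sides of $V_B$ since $V_A$ and $V_C$ are disjoint but each is adjacent to $V_B$, which is exactly your ``globally monotone'' check), and the localisation of proper intervals inside single blocks does yield uniqueness, including uniqueness of $\sigma$ across the $12/21$-versus-long-simple divide.

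Two elided steps deserve flagging, both patchable with tools you already have. First, from ``the set $S$ of blocks met by a proper interval $I$ is an interval of $\sigma$,'' simplicity gives $|S|=1$ \emph{or} $S$ is all of $\sigma$; you silently discard the second option when you conclude $I$ meets exactly one block. It must be ruled out: if $I$ met all $n\geq 4$ blocks it would contain every middle block entirely and properly overlap the two end blocks, and two applications of your overlap lemma (removing $\alpha'_1$, then $\alpha'_n$) exhibit the blocks $2,\ldots,n-1$ as a proper interval of $\sigma$ of size $n-2\geq 2$, contradicting simplicity. (Note also that establishing $S$ is an interval of $\sigma$ already requires the overlap lemma, via the union $\bigcup_{i\in S}\alpha'_i$.) Second, uniqueness of $\sigma$ in the degenerate case needs the easy fact that no permutation is simultaneously sum and skew decomposable: if $\pi(\{1,\ldots,k\})=\{1,\ldots,k\}$ and $\pi(\{1,\ldots,l\})=\{n-l+1,\ldots,n\}$ with $k,l<n$, then comparing $k$ with $l$ gives an immediate contradiction either way. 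You flagged the first issue yourself as the main obstacle; with these two points made explicit the argument is complete and matches the proof in the cited source in spirit.
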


Our other technical tool is a diagrammatic expositional aid.  We regard a permutation $\pi$ as a set of points $(i,\pi(i))$ lying in an $n\times n$ grid within the plane.  We partition such a square grid into cells using a fixed number of vertical and horizontal dividing lines.  The points that lie within a cell then define a subsequence of $\pi$ and we shall be particularly interested in when these subsequences are monotone.  Consider, for example, the permutation $[6,12,11,7,10,4,5,9,3,8,2,1]$.  As shown in Figure \ref{fig-grid-example} we can represent it on a $3\times 2$ grid with one empty cell, two increasing cells, and three decreasing cells.

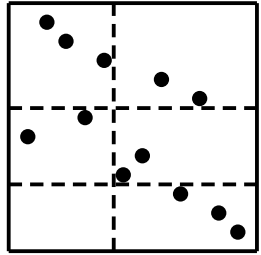
\begin{figure}
\begin{center}
\psset{xunit=0.01in, yunit=0.01in} \psset{linewidth=0.005in}
\begin{pspicture}(0,0)(140,140)
\psline[linestyle=solid,linewidth=0.02in](0,0)(0,130)
\psline[linestyle=solid,linewidth=0.02in](0,0)(130,0)
\psline[linestyle=solid,linewidth=0.02in](130,0)(130,130)
\psline[linestyle=solid,linewidth=0.02in](0,130)(130,130)
\psline[linestyle=dashed,linewidth=0.02in](0,35)(130,35)
\psline[linestyle=dashed,linewidth=0.02in](0,75)(130,75)
\psline[linestyle=dashed,linewidth=0.02in](55,0)(55,130)
\pscircle*(10,60){0.04in}
\pscircle*(20,120){0.04in}
\pscircle*(30,110){0.04in}
\pscircle*(40,70){0.04in}
\pscircle*(50,100){0.04in}
\pscircle*(60,40){0.04in}
\pscircle*(70,50){0.04in}
\pscircle*(80,90){0.04in}
\pscircle*(90,30){0.04in}
\pscircle*(100,80){0.04in}
\pscircle*(110,20){0.04in}
\pscircle*(120,10){0.04in}
\end{pspicture}
\end{center}
\caption{The permutation $[6,12,11,7,10,4,5,9,3,8,2,1]$ on a $3\times 2$ grid}
\label{fig-grid-example}
\end{figure}

The set of permutations whose diagram can be divided into cells with a fixed number of vertical and horizontal lines, where it is stipulated that each cell should contain either monotone increasing points, monotone decreasing points, a single point, or be empty, defines a pattern class, called a \emph{grid} class.  We may denote such a grid class by specifying its matrix of cells with each cell containing $+1$ for increasing, $-1$ for decreasing, a dot for a single point and $0$ for empty.  It is convenient sometimes to denote the content of an increasing (respectively decreasing) cell by a line ascending (respectively descending) to the right.  So the permutation $[6,12,11,7,10,4,5,9,3,8,2,1]$ belongs to the grid class whose matrix is
\[
\left( \begin{array}{cc}
-1&-1\\
+1&+1\\
0&-1
\end{array}
\right)
\]
or whose pictorial representation is
\begin{center}
\psset{xunit=0.006in, yunit=0.006in} \psset{linewidth=0.005in}
\begin{pspicture}(0,0)(130,130)
\psaxes[dy=200,dx=200,tickstyle=bottom,showorigin=false,labels=none](0,0)(120,120)
\psline[linestyle=solid,linewidth=0.01in](0,120)(120,120)
\psline[linestyle=solid,linewidth=0.01in](120,0)(120,120)
\psline[linestyle=dashed,linewidth=0.02in](0,40)(120,40)
\psline[linestyle=dashed,linewidth=0.02in](0,80)(120,80)
\psline[linestyle=dashed,linewidth=0.02in](60,0)(60,120)
\psline[linestyle=solid,linewidth=0.01in](65,35)(115,5)
\psline[linestyle=solid,linewidth=0.01in](5,45)(55,75)
\psline[linestyle=solid,linewidth=0.01in](65,45)(115,75)
\psline[linestyle=solid,linewidth=0.01in](5,115)(55,85)
\psline[linestyle=solid,linewidth=0.01in](65,115)(115,85)
\end{pspicture}
\end{center}

The bulk of our paper is an analysis of $\Av(2143, 4231)$.  We determine its simple permutations and describe them in grid class terminology.  Next we examine how the simple permutations  can be inflated and thereby we obtain a complete description of the pattern class.  The enumeration calculation is then carried out using encodings of permutations derived from the grid class description.  The paper ends with some remarks on pattern classes for which similar analyses may be possible.
\section{The structure of  \boldmath{$\Av(2143,4231)$}}

\subsection{The simple permutations}

We shall obtain the general form of simple permutations in the class by a division into cases according to the pattern determined by the four extremal points of the permutation.  Let these points be denoted by $\ell$ the leftmost point, $r$ the rightmost, $u$ the highest and $d$ the lowest.  In a simple permutation of length at least 4 these points are all distinct and their pattern will therefore be one of 2143 (which is impossible as it is a basis element of the class), 3412, 2413 and 3142.

\begin{lemma}
If $\pi$ is a simple permutation in  $\Av(2143, 4231)$ and the pattern determined by $\ell$, $r$, $u$ and $d$ is 3412 then $\pi$ is one of 42513 or 35142.
\end{lemma}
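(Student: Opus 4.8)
The plan is to pin $\pi$ down by locating its non-extremal points relative to the four extremal ones. Writing the extremal points in position order as $\ell,u,d,r$ (this is exactly what the pattern $3412$ records), we have $x(\ell)<x(u)<x(d)<x(r)$ and $y(d)<y(r)<y(\ell)<y(u)$. I would draw the two vertical lines through $u$ and $d$ and the two horizontal lines through $\ell$ and $r$; since $\pi$ is a permutation these four lines meet no further points, so every remaining point lies in one of nine open cells, which I label by a column $A,B,C$ (left of $u$, between $u$ and $d$, right of $d$) and a row $\mathrm{I},\mathrm{II},\mathrm{III}$ (below $r$, between $r$ and $\ell$, above $\ell$).

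First I would extract monotonicity constraints purely from the two forbidden patterns, anchoring on the global maximum $u$, the global minimum $d$, and on $\ell,r$. Because $u$ precedes $d$ and they are the extreme values, $4231$-avoidance forbids any ascent strictly between them, and more generally any ascent inside the box cut out by a descending pair of extremal points; this forces each of column $B$, row $\mathrm{II}$, and the blocks $(A\cup B)\times(\mathrm I\cup\mathrm{II})$ and $(B\cup C)\times(\mathrm{II}\cup\mathrm{III})$ to be decreasing. Dually, $2143$-avoidance applied to the descent $(u,r)$ forces cell $A\mathrm I$ to be increasing, and applied to the descent $(\ell,d)$ forces cell $C\mathrm{III}$ to be increasing. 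In particular $A\mathrm I$ is simultaneously increasing and decreasing, and likewise $C\mathrm{III}$, so each of these two cells holds at most one point.

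Next I would invoke simplicity to empty the seven remaining cells and to finish. A maximal decreasing run occupying a block of consecutive positions and values is an interval, so wherever the constraints above isolate such a run it must collapse to a single point or vanish. Iterating this, together with the observation that $\{\ell,u\}$ is an interval unless column $A$ or row $\mathrm{III}$ is occupied (and symmetrically that $\{d,r\}$ is an interval unless column $C$ or row $\mathrm I$ is occupied), rules out every cell except $A\mathrm I$ and $C\mathrm{III}$ and shows these are not both occupied. What survives is the four extremal points together with exactly one extra point, in $A\mathrm I$ or in $C\mathrm{III}$; reading off the resulting pattern gives precisely $42513$ and $35142$, and I would close by verifying that these are simple and avoid $2143$ and $4231$. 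The reverse–complement symmetry fixes the extremal pattern $3412$ and interchanges $A\mathrm I$ with $C\mathrm{III}$ (and $42513$ with $35142$), so only one of the two sides needs detailed treatment.

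The main obstacle is that a lone stray point never completes a $2143$ or a $4231$: every single-point extension of the four extremal points avoids both patterns (indeed each of the nine cells yields a pattern-avoiding quintuple). Hence the emptiness of the central cells cannot be read off from pattern-avoidance alone and must be wrung out of simplicity through interval arguments. The delicate part is the bookkeeping of which cells can be occupied simultaneously — for instance a point in $B\mathrm{II}$ forces column $A$ or row $\mathrm{III}$ to be nonempty, and one must chase such implications until a genuine nontrivial interval, and hence a contradiction to simplicity, is forced to appear.
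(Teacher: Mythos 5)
Your setup is sound as far as it goes, and it is in the same spirit as the paper's proof (the grid of nine cells cut out by the four extremal points, monotonicity constraints extracted from the two forbidden patterns, then simplicity via interval arguments). But the central step --- ``iterating this \ldots rules out every cell except $A\mathrm{I}$ and $C\mathrm{III}$'' --- is not merely left unexecuted: it cannot be executed from the facts you have listed. Everything you extract from pattern avoidance is a \emph{monotonicity} constraint within a cell or block; what is missing are the \emph{joint emptiness} constraints that 2143 forces across cells, and it is these, not simplicity, that do most of the killing. Concretely, if $g$ lies in column $A$ below $\ell$ (your $A\mathrm{I}\cup A\mathrm{II}$) and $p$ lies to the right of $u$ above $\ell$ (your $B\mathrm{III}\cup C\mathrm{III}$), then $\ell\,g\,u\,p$ is a 2143, so one of these two regions must be entirely empty; this dichotomy (with its reverse complement) is the very first line of the paper's proof and is absent from your inventory. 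That it is genuinely needed is witnessed by the permutation $4627513$: its extremal points $(4,7,1,3)$ form $3412$; its occupied cells are $A\mathrm{III}=\{6\}$, $A\mathrm{I}=\{2\}$ and $B\mathrm{III}=\{5\}$, so every constraint you state holds trivially ($A\mathrm{I}$ is a single point, $C\mathrm{III}$ is empty, column $B$, row II and both blocks contain at most one point); and it is \emph{simple} --- yet it is neither $42513$ nor $35142$. Since it is simple and satisfies all your premises, no amount of ``chasing implications until a nontrivial interval appears'' can eliminate it; only the omitted 2143 instance $(4,2,7,5)$ does.

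It is also worth noting that the paper's division of labour is the opposite of yours. After the dichotomy, it shows (by an interval argument) that the cell you call $A\mathrm{I}$ must actually be \emph{occupied}, so that the five points $\ell,g,u,d,r$ form $42513$; it then refines to the sixteen cells determined by these five points, empties ten of them outright by 2143/4231 arguments, derives one cross-cell order relation (every point of $P$ below every point of $Q$, from a 4231 anchored at $d$ and $g$ --- another ingredient your sketch lacks), and only then lets simplicity collapse the remaining six cells. Your closing observation is correct that single-point extensions of the extremal quadruple avoid both patterns, so simplicity must carry real weight; but in the actual proof, pattern avoidance first cuts the configuration space down to an essentially rigid five-point skeleton, and simplicity administers only the final blows. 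To repair your argument you would need to add the two 2143 dichotomies and redo the analysis around a fifth anchor point --- at which stage you would essentially have reconstructed the paper's proof.
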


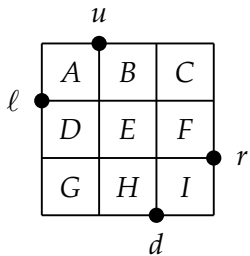
\begin{figure}
\begin{center}
\psset{xunit=0.03in, yunit=0.03in} \psset{linewidth=0.005in}
\begin{pspicture}(0,0)(60,60)
\multido{\i=10+10}{4}{%
\psline[linestyle=solid,linewidth=0.01in](10,\i)(40,\i)
\psline[linestyle=solid,linewidth=0.01in](\i,10)(\i,40)}
\pscircle*(10,30){0.04in}
\pscircle*(20,40){0.04in}
\pscircle*(30,10){0.04in}
\pscircle*(40,20){0.04in}
\rput[c](15,35){$A$}
\rput[c](25,35){$B$}
\rput[c](35,35){$C$}
\rput[c](15,25){$D$}
\rput[c](25,25){$E$}
\rput[c](35,25){$F$}
\rput[c](15,15){$G$}
\rput[c](25,15){$H$}
\rput[c](35,15){$I$}
\rput[c](5,30){$\ell$}
\rput[c](20,45){$u$}
\rput[c](30,5){$d$}
\rput[c](45,20){$r$}

\end{pspicture}
\end{center}
\caption{The extremal points form the pattern 3412}
\label{First3412}
\end{figure}


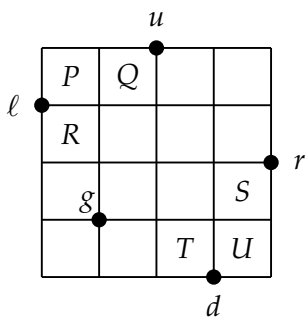
\begin{figure}
\begin{center}
\psset{xunit=0.03in, yunit=0.03in} \psset{linewidth=0.005in}
\begin{pspicture}(0,0)(60,60)
\multido{\i=10+10}{5}{%
\psline[linestyle=solid,linewidth=0.01in](10,\i)(50,\i)
\psline[linestyle=solid,linewidth=0.01in](\i,10)(\i,50)}
\pscircle*(10,40){0.04in}
\pscircle*(20,20){0.04in}
\pscircle*(30,50){0.04in}
\pscircle*(40,10){0.04in}
\pscircle*(50,30){0.04in}
\rput[c](35,15){$T$}
\rput[c](45,15){$U$}
\rput[c](45,25){$S$}
\rput[c](15,35){$R$}
\rput[c](15,45){$P$}
\rput[c](25,45){$Q$}
\rput[c](5,40){$\ell$}
\rput[c](30,55){$u$}
\rput[c](40,5){$d$}
\rput[c](55,30){$r$}
\rput[c](18,23){$g$}
\end{pspicture}
\end{center}
\caption{The extremal points and an interior point form the pattern $42513$.}
\label{fig-42513}
\end{figure}

\begin{proof}
Consider the regions of $\pi$ depicted in Figure \ref{First3412}.  By the 2143 avoidance one of the regions $B\cup C$ and $D\cup G$ is empty.  

To within a reverse complement symmetry we may take $B\cup C$ to be empty.  But then region $G$ is not empty.  If it were then, because $\ell A u$ is not an interval, $D$ is not empty; however all points of region $E\cup F$ lie below all points of region $D$ as 4231 is not a subpermutation of $\pi$; and now it would follow that the points of $A\cup D$ together with $\ell$ and $u$ would be an interval.

Thus $G$ contains some point $g$ and the four extremal points of $\sigma$ together with $g$ give rise to a 42513 pattern. We are now in the situation depicted in Figure~\ref{fig-42513}. Of the 16 square regions, the 10 unlabeled regions must be empty in order to avoid either 4231 or 2143. We claim that in order for $\sigma$ to be simple, the remaining six labeled regions must be empty. To justify this claim we begin by noting that every point of $P$ must lie below every point in $Q$, as otherwise we would find a 4231-pattern, using the points $d$ and $g$. Thus the points in region $Q$, together with $u$, form an interval, and since $\sigma$ is simple we infer that $Q$ must be empty. Now, however, the two regions $P$ and $R$, together with the point $\ell$ form an interval, from which we conclude that both $P$ and $R$ must be empty. The regions $S,T$ and $U$ must also be empty by a similar argument (or by taking inverses). Hence $\sigma=42513$.  Similarly, when $D\cup G$ is empty, we have $\sigma=35142$.
\end{proof}

\begin{lemma}\label{extremal3142}
If $\pi$ is a simple permutation in  $\Av(2143, 4231)$ and the pattern determined by $\ell$, $r$, $u$ and $d$ is 3142 then $\pi$ has the form shown in Figure \ref{fig-3142-case-result}.  In this figure the letters $A, B, C, D, E$ label the cells that contain them.  Unlabelled cells are empty.  Furthermore cells $C$ and $E$ are empty and possibly cells $A$ and $G$ are empty also.
\end{lemma}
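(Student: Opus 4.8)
The plan is to follow the template of the preceding lemma: partition the square containing $\pi$ by the horizontal and vertical lines through the four extremal points and then peel cells away, using the two forbidden patterns first and simplicity last. Listing the extremal points in position order as $\ell,d,u,r$ (their values occurring in the order $d<r<\ell<u$, which is exactly what the pattern $3142$ records), these four lines cut the remaining points into a $3\times 3$ array of cells. I would dispose of the two extreme corner cells immediately by a single-point argument: a point lying to the right of $u$ whose value lies between those of $\ell$ and $u$ forms, together with $\ell,d,u$, a $2143$ pattern, so that cell is empty; and since the reverse-complement symmetry fixes the whole configuration (interchanging $\ell\leftrightarrow r$ and $u\leftrightarrow d$, and sending $3142$ to itself), the diagonally opposite corner cell is empty as well. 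This already accounts for the unlabelled (empty) cells of Figure~\ref{fig-3142-case-result}.

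Next I would pin down the contents of the surviving cells by two-point arguments, exactly as avoidance of $4231$ was used above to order regions $E\cup F$ below $D$. The typical move is: a descent among two points lying above $\ell$ and between the columns of $d$ and $u$ gives, with $\ell$ and $d$, a $2143$, forcing that cell to be increasing; and a point of one cell lying below a point of a neighbouring cell frequently produces a $4231$ with $u$ and $d$. Running through the cells (halving the work with the reverse-complement symmetry) forces each surviving cell to be monotone of a determined orientation and fixes the relative vertical order of their contents, which is precisely the monotone picture recorded by the cells $A,B,C,D,E$ (and $G$) in the figure.

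Finally, simplicity eliminates the distinguished cells $C$ and $E$, in the manner of the interval arguments above (where $Q$ with $u$, and then $P$ and $R$ with $\ell$, were shown to be intervals). Once the monotone structure is in place, the points of $C$ together with the appropriate extremal point and the adjacent monotone cell form a contiguous block of consecutive values, i.e.\ a non-trivial interval, so $C$ must be empty; the argument for $E$ is the reverse-complement image. The cells $A$ and $G$, by contrast, are \emph{not} forced either way: no instance of $2143$ or $4231$ rules them out, and their points never complete such an interval, which is exactly the source of the ``possibly empty'' clause. I expect this last bookkeeping to be the main obstacle: with several monotone cells simultaneously present one must check, case by case, that $C$ and $E$ genuinely close up an interval so that simplicity bites, while verifying that no analogous interval can be built from $A$ or $G$, and that the symmetry reductions have covered every configuration.
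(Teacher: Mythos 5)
Your opening moves match the paper's: the two corner cells are emptied by single-point $2143$ arguments plus the reverse complement symmetry, and two-point arguments give the monotone skeleton (one small slip: the middle row $C\cup D\cup E$ is forced decreasing because an ascent $p,q$ there makes $\ell\,p\,q\,r$ a $4231$ --- the outer points are $\ell$ and $r$, not $u$ and $d$). The genuine gap is your final step: the emptiness of $C$ is \emph{not} a pure interval argument. The set $\{\ell\}\cup A\cup C$ is an interval only when every point of $A$ lies below every point of $B$; but the configuration the lemma ultimately allows has the single point of $A$ sitting \emph{above} the lowest point of $B$, so your claimed ``contiguous block of consecutive values'' fails in exactly the cases that survive. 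The paper's argument is a dichotomy in which simplicity and pattern-avoidance play complementary roles: if $C$ were nonempty and no point of $A$ exceeded a point of $B$, then $\{\ell\}\cup A\cup C$ would be a non-trivial interval, contradicting simplicity; hence simplicity supplies witnesses $a\in A$ above $b\in B$, and then any $c\in C$ is killed by the forbidden patterns themselves --- $c$ left of $a$ makes $\ell\,c\,a\,b$ a $2143$, while $c$ right of $a$ makes $a\,c\,b\,r$ a $4231$. Without this case split your argument proves nothing in the interleaved case.

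The second gap is your claim that $A$ and $G$ are ``not forced either way'' and that ``their points never complete such an interval'': this is false, and without correcting it you do not reach the figure, in which $A$ and $G$ contain \emph{at most one point} each. The same simplicity-plus-$4231$ reasoning does bite on $A$: first, $B$ must be increasing (by $2143$) and nonempty outside a trivial case; if $A$ had points below the lowest point $b$ of $B$, those points together with $\ell$ would form an interval unless separated in position by a point of $A$ above $b$, and that separating point $a'$ with a later low point $a''$ of $A$ gives the $4231$ pattern $a'\,a''\,b\,r$; and if a point of $A$ lay above the second lowest point of $B$, it would form a $4231$ with the two lowest points of $B$ and $r$. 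So all of $A$ is squeezed in value strictly between the two lowest points of $B$, whence $A$ is itself an interval and has at most one point; the reverse complement bounds $G$ dually. As written, your sketch would only conclude that $A$ and $G$ are unconstrained blobs in their cells, which is strictly weaker than the lemma and would break the type $1$--$4$ classification and the enumeration that depend on $|A|,|G|\le 1$.
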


\begin{figure}
\begin{center}
\psset{xunit=0.06in, yunit=0.06in} \psset{linewidth=0.005in}
\begin{pspicture}(0,5)(45,45)
\multido{\i=10+10}{4}{%
\psline[linestyle=solid,linewidth=0.01in](10,\i)(40,\i)
\psline[linestyle=solid,linewidth=0.01in](\i,10)(\i,40)}
\pscircle*(10,30){0.04in}
\pscircle*(20,10){0.04in}
\pscircle*(30,40){0.04in}
\pscircle*(40,20){0.04in}
\pscircle*(15,34){0.04in}
\pscircle*(35,16){0.04in}
\pscircle*(22,32){0.04in}
\pscircle*(28,18){0.04in}
\psline[linestyle=solid,linewidth=0.02in](24,28)(26,22)
\psline[linestyle=solid,linewidth=0.02in](25,35)(30,40)
\psline[linestyle=solid,linewidth=0.02in](20,10)(25,15)

\rput[c](8,30){$\ell$}
\rput[c](20,8){$d$}
\rput[c](30,42){$u$}
\rput[c](42,20){$r$}
\rput[c](12,38){$A$}
\rput[c](22,38){$B$}
\rput[c](12,28){$C$}
\rput[c](22,28){$D$}
\rput[c](32,28){$E$}
\rput[c](22,18){$F$}
\rput[c](32,18){$G$}

\end{pspicture}
\end{center}
\caption{Simple permutations where the extremal points form the pattern 3142}
\label{fig-3142-case-result}
\end{figure}
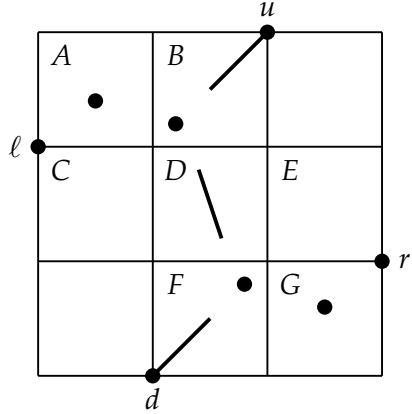

\begin{proof}

%
%

The initial situation is depicted in Figure~\ref{fig-3142-case} where the two unlabeled regions are empty since $\pi$ avoids $2143$.  The rectangle comprising cells $C,D,E$ must form a decreasing pattern to avoid $4231$: in particular, this means that points in the cell labeled $C$ lie above all points in cells $D$ and $E$. Thus, to avoid a non-trivial interval being formed by the point $\ell$ together with the points in cells $A$ and $C$, there must exist some point $a\in A$ lying above some point $b\in B$. However, any point in $C$ to the left of $a$ now participates as the `1' in a 2143 with $\ell$, $a$ and $b$, while any point in $C$ to the right of $a$ forms the `2' of a 4231 with $a$, $b$ and $r$. Thus $C$ must be empty.

\begin{figure}
\begin{center}
\psset{xunit=0.03in, yunit=0.03in} \psset{linewidth=0.005in}
\begin{pspicture}(0,0)(50,50)
\multido{\i=10+10}{4}{%
\psline[linestyle=solid,linewidth=0.01in](10,\i)(40,\i)
\psline[linestyle=solid,linewidth=0.01in](\i,10)(\i,40)}
\pscircle*(10,30){0.04in}
\pscircle*(20,10){0.04in}
\pscircle*(30,40){0.04in}
\pscircle*(40,20){0.04in}
\rput[c](15,35){$A$}
\rput[c](25,35){$B$}
\rput[c](15,25){$C$}
\rput[c](25,25){$D$}
\rput[c](35,25){$E$}
\rput[c](25,15){$F$}
\rput[c](35,15){$G$}
\rput[c](5,30){$\ell$}
\rput[c](20,5){$d$}
\rput[c](30,45){$u$}
\rput[c](45,20){$r$}
\end{pspicture}
\end{center}
\caption{The extremal points form the pattern 3142}
\label{fig-3142-case}
\end{figure}
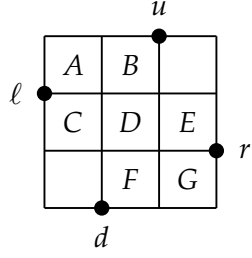

Because of the 2143-avoidance $B$ is increasing.  It cannot be empty except in a trivial case where $A$, which would now form an interval with $\ell$, is also empty; so we can let $b$ denote the lowest point of $B$.  The points of $A$ must all lie above $b$; for, if $A$ contained points below $b$, then these points together with $\ell$ would have to be separated by a point of $A$ that was greater than $b$ and then there would be a 4231 pattern.  Furthermore no point of $A$ can lie above the second largest point of $B$ (if $B$ has such a point) because, again, a 4231 pattern would be created.  But now the points of $A$ form an interval and so $A$ has at most one point ( which, when it exists, lies in value between the lowest and second lowest points of $B$).


A symmetric argument applies to the regions $E$ (which must be empty), $F$ (which contains an increasing sequence) and $G$ (which contains at most one point, below exactly one point of $F$), and from this we conclude that the permutation has the form given in Figure~\ref{fig-3142-case-result}.
\end{proof}

In the remaining case that the pattern of the extremal points is 2413 (which is the inverse of 3142) we have

\begin{corollary}\label{extremal2413}
If $\pi$ is a simple permutation in  $\Av(2143, 4231)$ and the pattern determined by $\ell$, $r$, $u$ and $d$ is 2413 then  $\pi^{-1}$ has the form shown in Figure \ref{fig-3142-case-result}.
\end{corollary}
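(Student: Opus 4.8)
The plan is to deduce this corollary directly from Lemma~\ref{extremal3142} by passing to the inverse permutation, exploiting the fact, already noted in the text, that $2413=(3142)^{-1}$. Inversion is one of the eight symmetries of the subpermutation order, so the first thing I would check is that it preserves membership in the class. This is immediate because both basis elements are involutions: $2143^{-1}=2143$ and $4231^{-1}=4231$. Consequently $\pi$ avoids $2143$ and $4231$ if and only if $\pi^{-1}$ does, and so $\pi\in\Av(2143,4231)$ forces $\pi^{-1}\in\Av(2143,4231)$.

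Next I would verify that inversion preserves simplicity and relocates the four extremal points in the expected way. Inversion is the reflection $(i,\pi(i))\mapsto(\pi(i),i)$ across the main diagonal, which interchanges the roles of positions and values; an interval of $\pi$ (consecutive positions carrying consecutive values) therefore corresponds exactly to an interval of $\pi^{-1}$, so $\pi$ is simple precisely when $\pi^{-1}$ is simple. Under the same reflection the leftmost, rightmost, highest and lowest points of $\pi$ become, respectively, the lowest, highest, rightmost and leftmost points of $\pi^{-1}$; hence the four extremal points of $\pi$ map onto the four extremal points of $\pi^{-1}$. Since the pattern of any subsequence is inverted by the diagonal reflection, these points, which form $2413$ in $\pi$, form $(2413)^{-1}=3142$ in $\pi^{-1}$.

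With these observations in hand the conclusion is immediate: $\pi^{-1}$ is a simple permutation in $\Av(2143,4231)$ whose four extremal points form the pattern $3142$, so Lemma~\ref{extremal3142} applies to $\pi^{-1}$ and shows that $\pi^{-1}$ has the form displayed in Figure~\ref{fig-3142-case-result}, which is exactly what is asserted. I do not expect any substantive obstacle here; the corollary is a genuine corollary, and the only point that repays care is the bookkeeping of the correspondence of the four extremal points under the diagonal reflection, which the preceding paragraph settles.
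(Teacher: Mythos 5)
Your proposal is correct and matches the paper's (implicit) argument exactly: the paper treats this as an immediate consequence of Lemma~\ref{extremal3142} under the inversion symmetry, noting only that $2413=(3142)^{-1}$, and your verification that $2143$ and $4231$ are involutions, that inversion preserves simplicity, and that the four extremal points correspond correctly under the diagonal reflection fills in precisely the bookkeeping the paper leaves unstated.
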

%
%

\subsection{Inflating the simple permutations}

Now we determine which inflations of simple permutations lie in $\Av(2143, 4231)$ and we start with the simple permutations of Lemma \ref{extremal3142}.
Let $\D$ be the grid class whose matrix is the column vector$\left(\begin{array}{c}1\\-1\\1\end{array}\right)$.  Then, by Lemma \ref{extremal3142}, a simple permutation $\sigma$ whose extremal points have the pattern 3142 has one of four types:
\begin{itemize}
\item $\sigma \in \D$ --- we will call these permutations \emph{type 1} simple permutations,
\item $\sigma \not\in \D$, but $\sigma - \sigma(1)\in \D$ --- \emph{type 2},
\item $\sigma \not\in \D$, but $\sigma - \sigma(n)\in \D$ --- \emph{type 3}, or
\item $\sigma \not\in \D$ and not of types 2 or 3, but $\sigma-\sigma(1)-\sigma(n)\in \D$ --- \emph{type 4}.
\end{itemize}

These types correspond, respectively, to cells $A$ and $G$ (as defined in Lemma \ref{extremal3142}) having sizes $(0,0), (1,0), (0,1)$ and $(1,1)$.  The cell $B$ together with $u$, the cell $D$, and the cell $F$ together with $d$ (also defined in Lemma \ref{extremal3142}) are the three cells of $\D$, read from top to bottom.
Note that types 2 and 3 can be obtained from each other by the reverse complement symmetry, so we will handle these two cases together.
We will let $\E = \Av(2143,312)$ and $\mathcal{F} = \Av(2143,231)$; these classes have  grid class descriptions as depicted in Figure \ref{E&F}.

\begin{figure}
\begin{center}
\psset{xunit=0.03in, yunit=0.03in} \psset{linewidth=0.005in}
\begin{pspicture}(0,5)(100,40)
\psline[linestyle=solid,linewidth=0.02in](10,10)(10,30)
\psline[linestyle=solid,linewidth=0.02in](10,10)(30,10)
\psline[linestyle=solid,linewidth=0.02in](10,30)(30,30)
\psline[linestyle=solid,linewidth=0.02in](30,10)(30,30)
\psline[linestyle=dashed,linewidth=0.01in](20,10)(20,30)
\psline[linestyle=dashed,linewidth=0.01in](10,20)(30,20)
\psline[linestyle=solid,linewidth=0.03in](10,10)(30,30)
\psline[linestyle=solid,linewidth=0.03in](20,20)(30,10)
\rput[c](20,5){$\E$}
\psline[linestyle=solid,linewidth=0.02in](60,10)(60,30)
\psline[linestyle=solid,linewidth=0.02in](60,10)(80,10)
\psline[linestyle=solid,linewidth=0.02in](60,30)(80,30)
\psline[linestyle=solid,linewidth=0.02in](80,10)(80,30)
\psline[linestyle=dashed,linewidth=0.01in](70,10)(70,30)
\psline[linestyle=dashed,linewidth=0.01in](60,20)(80,20)
\psline[linestyle=solid,linewidth=0.03in](60,10)(80,30)
\psline[linestyle=solid,linewidth=0.03in](60,30)(70,20)
\rput[c](70,5){$\F$}
\end{pspicture}
\end{center}
\caption{The pattern classes $\E$ and $\F$}
\label{E&F}
\end{figure}

\begin{lemma}\label{lem-type1-inflation}A type 1 simple permutation can be inflated as follows:
\begin{itemize}
\item The first point can be inflated by $\E$.
\item The last point can be inflated by $\F$.
\item The points of regions $B$ and $F$ and the points $u$ and $d$ can be inflated by $\Av(21)$.
\item The points of region $D$ can be inflated by $\Av(12)$.
\end{itemize}
\end{lemma}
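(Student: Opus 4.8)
My plan is to prove the lemma as an exact characterization: writing $\alpha$ for the block replacing the first point $\ell$, $\omega$ for the block replacing the last point $r$, and similarly for the others, I will show that the inflation lies in $\Av(2143,4231)$ \emph{precisely} when $\alpha\in\E$, $\omega\in\F$, the blocks at $u,d$ and at the points of $B$ and $F$ are increasing (in $\Av(21)$), and the blocks at the points of $D$ are decreasing (in $\Av(12)$). Throughout I will use the concrete shape of $\sigma$ from Lemma~\ref{extremal3142}: reading left to right its points are $\ell$, then $d$, then the points of $B$, $D$, $F$ interleaved, then $u$, and finally $r$; and from lowest to highest the values occur in the order $d,\ F,\ r,\ D,\ \ell,\ B,\ u$, so that $B\cup\{u\}$ is an increasing run in the top band, $D$ a decreasing run in the middle band, and $F\cup\{d\}$ an increasing run in the bottom band. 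I will also invoke the reverse-complement symmetry, which preserves $\Av(2143,4231)$ and carries the type~1 shape to itself by interchanging $\ell\leftrightarrow r$, $u\leftrightarrow d$, $B\leftrightarrow F$ and fixing $D$, while sending $\E$ to $\F$ and preserving $\Av(21)$ and $\Av(12)$; this pairs the $\ell$-rule with the $r$-rule, the $u$-rule with the $d$-rule, and the $B$-rule with the $F$-rule, and so halves the work.

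For necessity I exhibit a forbidden pattern whenever a block is too large. If the block at $u$, or at a point of $B$, contains a descent $x>y$, then since $x,y$ both lie above $\ell$ and to the right of $d$, the points $\ell,d,x,y$ form a $2143$. If the block at a point of $D$ contains an ascent $x<y$, then $\ell$ (to its left and above) and $r$ (to its right and below) yield $\ell,x,y,r$ as a $4231$. If $\alpha$ contains a $312$, then appending the globally lowest point $d$, which lies to its right, produces a $4231$; together with the fact that $\alpha$ must avoid $2143$ as a sub-block this forces $\alpha\in\Av(2143,312)=\E$. The statements for $d$, $F$ and $r$ then follow by the reverse-complement symmetry.

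For sufficiency I assume every block lies in its prescribed class and rule out any $2143$ or $4231$. The engine is the standard fact that in an occurrence of a pattern $\beta$ in an inflation the points falling in a common block form an interval of $\beta$, and collapsing each such block to a point gives a quotient pattern that embeds in $\sigma$. First, no single block contains $\beta$, since each of $\E,\F,\Av(21),\Av(12)$ avoids both $2143$ and $4231$ (here $\E\subseteq\Av(4231)$ because $4231$ contains $312$, and $\F\subseteq\Av(4231)$ because $4231$ contains $231$). Since $\sigma$ itself avoids both patterns the quotient cannot equal $\beta$, so some block receives at least two of the four points; I then run through the few interval-partitions. For $2143$ the non-singleton intervals are $\{1,2\}$ and $\{3,4\}$, each a descent, giving quotients $132$, $213$ or $12$; for $4231$ they are $\{2,3\}$ (an ascent), $\{1,2,3\}$ (a $312$) and $\{2,3,4\}$ (a $231$), giving quotients $321$, $21$ and $21$. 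Each case dies by combining which blocks can hold the required sub-pattern with the rigid geometry of $\sigma$: the larger points of a $132$ or $213$ would have to form a descent drawn entirely from one of the increasing runs $B\cup\{u\}$ or $F\cup\{d\}$, which offer none; the middle point of any $321$ of $\sigma$ must lie in the decreasing run $D$, whose blocks carry no ascent; and the two quotients $21$ force the $312$-block to be $\omega$ (at $r$) and the $231$-block to be $\alpha$ (at $\ell$), each extremal and so unable to supply the extra lower point to the right of $r$, respectively higher point to the left of $\ell$, that $21$ demands.

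The main obstacle is this sufficiency argument, and within it the bookkeeping of the quotient cases: everything hinges on the skeleton $\sigma$ already avoiding both patterns, which forces any violation to concentrate in a single block, after which the tension between the internal restriction on that block and the left--right/top--bottom rigidity of $\sigma$ (in particular $\ell$ globally leftmost, $r$ globally rightmost, and the three bands monotone) eliminates every configuration. I expect the two delicate points to be verifying that no two descent-capable blocks (those at $\ell$, at $r$, or in $D$) are arranged as an ascent in $\sigma$, which kills the quotient $12$ for $2143$, and that under the assumed classes the only blocks able to house a $312$ or a $231$ are the extremal ones; I will isolate these as short preliminary observations before the case analysis.
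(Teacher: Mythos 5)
Your proof is correct, and it is strictly more complete than the paper's. The necessity half coincides in substance with the paper's argument: your witnesses are the same up to symmetry (the paper kills a descent in an $F$-block with the 21 formed by the last two points $u,r$, and kills an ascent in a $D$-block via the 321 through $\ell$ and $r$; you use $\ell,d$ for $B\cup\{u\}$ and then reverse--complement, and your ``312 in $\alpha$ plus $d$ gives 4231'' is exactly the paper's justification that the first point inflates by $\E=\Av(312,2143)$). Where you genuinely diverge is sufficiency: the paper compresses this direction into the phrase ``but there are no further restrictions,'' leaving the reader to check it from the grid picture, whereas you prove it via the standard interval/quotient analysis of inflations. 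I verified your bookkeeping: the proper nontrivial intervals of $2143$ are indeed $\{1,2\}$ and $\{3,4\}$ (quotients $132$, $213$, $12$) and those of $4231$ are $\{2,3\}$, $\{1,2,3\}$, $\{2,3,4\}$ (quotients $321$, $21$, $21$); your three geometric facts all hold in the type~1 shape --- the descent-capable blocks sit at $\ell$, the points of $D$, and $r$, and $\{\ell\}\cup D\cup\{r\}$ is a decreasing sequence in $\sigma$ (killing quotient $12$, and also $132$ and $213$ since the points above any such block to its right lie in an increasing run, and the points below $r$ or below-left of a $D$-point lie in the increasing run $\{d\}\cup F$); the only points of $\sigma$ with a point above-left and a point below-right are those of $D$ (killing $321$); and among the block classes only $\F$ contains $312$ and only $\E$ contains $231$, forcing those blocks to the extremal positions $r$ and $\ell$ where the needed fourth point cannot exist. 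Your extra work buys something real: the enumeration in Section~3 uses this lemma as an exact characterization, and your proof certifies the sufficiency that the paper asserts by inspection, at the cost of a longer case analysis.
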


\begin{proof} Let $\sigma$ be a type 1 simple permutation in $\Av(2143,4231)$ of length $n$. The first and last points can be inflated by $\E$ and $\F$ respectively: for example, since the bottom element of $\sigma$ lies to the right of the leftmost point, inflating the leftmost point by 312 would give rise to a 4231 pattern. Thus the leftmost point must avoid 312, in addition to 2143, but there are no further restrictions.

Every point in the increasing region $F$ can be inflated only by permutations from $\Av(21)$: to see this, note that the last two points of $\sigma$ form a 21-pattern above and to the right of all points in this region, so inflations of points of $F$ must avoid 21 in order not to create a 2143 pattern. A similar argument applies to the points in the increasing region $B$. For the interior points in the decreasing region $D$, note that each forms the `2' of a 321-pattern with the leftmost and rightmost points of $\sigma$. Thus points of $D$ can only be inflated by $\Av(12)$ in order not to create a 4231-pattern.
\end{proof}

The same type of analysis proves:

\begin{lemma}\label{lem-type2-inflation}A type 2 simple permutation of length $n$ can be inflated as follows:
\begin{itemize}
\item The second point (the single point of region $A$) can be inflated by $\E$.
\item The last point can be inflated by $\F$.
\item The first point, the points $d$ and $u$, all points in region $B$ apart from its first, and all points in region $F$ can be inflated by $\Av(21)$.
\item The points of region $D$ can be inflated  by $\Av(12)$.
\item The first point of region $B$ cannot be inflated.
\end{itemize}
\end{lemma}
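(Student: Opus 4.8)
The plan is to imitate the proof of Lemma~\ref{lem-type1-inflation}, keeping careful track of the single structural feature that separates type~2 from type~1. By Lemma~\ref{extremal3142}, region $A$ now contains exactly one point $a$, and by the analysis there this $a$ lies, in value, strictly between the lowest point $b_1$ of $B$ and the second-lowest point of $B$, while lying to the left of all of $B$. Thus, reading left to right, $a$ sits above $b_1$, so $a\,b_1$ is a descent that had no counterpart in type~1. First I would fix the left-to-right order of the points as $\ell,a,d$, then the interleaved points of $B,D,F$, then $u,r$, together with the value order $d<F<r<D<\ell<b_1<a<B'<u$, where $B'=B\setminus\{b_1\}$. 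With this in hand every assertion reduces, exactly as in type~1, to exhibiting for each forbidden inflation an explicit $2143$ or $4231$.

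For the bullets that agree with type~1 I would reuse those arguments essentially verbatim: a $231$-inflation of $r$ gives a $4231$ with $u$; a $12$-inflation of any point of $D$ gives a $4231$ with $\ell$ and $r$; and a $21$-inflation of a point of $F$, of $d$, of $u$, or of any point of $B$ gives a $2143$ (using the trailing descent $u\,r$ as the high pair for $F$ and $d$, and the descent $\ell\,d$ as the low pair for $u$ and $B$). The one new point playing an old role is $a$: inflating $a$ by $312$ yields a $4231$ together with $d$ — precisely the obstruction the leftmost point faced in type~1 — and there is no further constraint, so $a$ now carries the class $\E=\Av(2143,312)$.

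The remaining two claims are where the descent $a\,b_1$ does the work, and I expect these to be the crux. The leftmost point can no longer absorb $\E$: if $\ell$ is inflated by a $21$ into points $p>q$, then $p,q,a,b_1$ reads as $2143$, since $q<p<b_1<a$ in left-to-right order; hence $\ell$ is restricted to $\Av(21)$. Most strikingly, $b_1$ cannot be inflated at all: a $12$-inflation into $p<q$ produces a $4231$ on $a,p,q,r$ (as $r<p<q<a$), while a $21$-inflation into $p>q$ produces a $2143$ on $\ell,d,p,q$ (as $d<\ell<q<p$), and since every permutation of length at least two contains $12$ or $21$, the point $b_1$ must stay a singleton. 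I would close by checking that these are the only new obstructions, in particular that the permitted inflations combine without creating a $2143$ or $4231$; here it is worth noting that the freedom of the points of $B'$ to take $\Av(21)$ survives precisely because, unlike $b_1$, no point lies both above and to the left of them, so the $4231$ that freezes a $12$-inflation of $b_1$ has no analogue higher in $B$. The main difficulty is bookkeeping: it is the single inequality placing $a$ just above $b_1$ that simultaneously frees $a$, restricts $\ell$, and rigidifies $b_1$, and the patterns must be assembled with its exact position in the value order kept straight.
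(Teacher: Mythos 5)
Your proof is correct and follows exactly the approach the paper intends: the paper gives no explicit proof of this lemma, saying only that ``the same type of analysis'' as for type 1 applies, and your argument is precisely that analysis carried out, with the new ingredients (the point $a$ of region $A$ inheriting $\E$ via the $312$-then-$d$ obstruction, the leftmost point dropping to $\Av(21)$ via the $2143$ on $p,q,a,b_1$, and $b_1$ frozen by the pair of patterns $a,p,q,r$ and $\ell,d,p,q$) all verified correctly. Your value-order bookkeeping $d<F<r<D<\ell<b_1<a<B'<u$ matches the structure established in Lemma~\ref{extremal3142}, so nothing further is needed.
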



An analogous argument holds by considering the reverse complement symmetry for type 3 simple permutations. For type 4 permutations, a similar argument yields:

\begin{lemma}\label{lem-type4-inflation}A type 4 simple permutation of length $n$ can be inflated as follows:
\begin{itemize}
\item The second point can be inflated by $\E$.
\item The penultimate point can be inflated by $\F$.
\item The first point, the last point, all points in region $B$ apart from its first, and all points in region $F$ apart from its last can be inflated by $\Av(21)$.
\item The points of region $D$ can be inflated  by $\Av(12)$.
\item The first point of region $B$ and the last point of region $F$ cannot be inflated.
\end{itemize}
\end{lemma}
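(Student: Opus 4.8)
The plan is to read the type 4 case as the superposition of the type 2 analysis of Lemma~\ref{lem-type2-inflation} with its reverse complement, and then to check that switching on both ends at once creates nothing new. Throughout I use the labelling of Figure~\ref{fig-3142-case-result}: in a type 4 permutation both $A$ and $G$ are singletons, and reading positions from left to right the points occur in the order $\ell$, the point of $A$, the monotone material of $B$, $D$, $F$ (with $u$ the top-right point of the increasing cell $B\cup\{u\}$ and $d$ the bottom-left point of $F\cup\{d\}$), the point of $G$, and finally $r$. Thus the ``second point'' and ``penultimate point'' of the statement are exactly the points of $A$ and $G$.

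First I would settle the left half --- $\ell$, the point of $A$, and the regions $B$ and $D$ --- by importing the type 2 argument almost verbatim, since it refers only to points lying weakly to the left of $u$ together with $r$, and $r$ is used there merely as a low point at the far right, a role untouched by the new point of $G$. Concretely: each point of $D$ is the `2' of a 321 formed with $\ell$ and $r$, so an ascent inside it completes a 4231 and it admits only $\Av(12)$ inflations; every point of $B$ has the descent $\ell,d$ below and to its left, so a descent inside it completes a 2143 and it admits only $\Av(21)$ inflations; the point of $A$ is inflatable by $\E$ for the reason given in Lemma~\ref{lem-type2-inflation}; and the first point $b$ of $B$ is additionally barred from $\Av(12)$ inflations, because an ascent inside $b$, the point of $A$ (just above and to the left of $b$) and $r$ together form a 4231, so $b$ cannot be inflated at all. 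The one place the presence of $A$ newly bites is $\ell$: the point of $A$ and the point $b$ form a descent lying just above the value of $\ell$, so a descent inside $\ell$ completes a 2143, and $\ell$ therefore drops from the $\E$ it enjoyed in type 1 down to $\Av(21)$.

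Next I would obtain the right half --- $r$, the point of $G$, and the regions $F$ and $B$ read from the right --- for free by applying the reverse complement symmetry to the previous paragraph; this is precisely the type 3 picture. Reverse complement interchanges $A\leftrightarrow G$, $B\leftrightarrow F$, $\ell\leftrightarrow r$, $u\leftrightarrow d$ and $\E\leftrightarrow\F$ while fixing both 2143 and 4231, so the left-half conclusions transport to: the point of $G$ is inflatable by $\F$; the last point of $F$ cannot be inflated; the remaining points of $F$ admit only $\Av(21)$; and $r$ is pushed down to $\Av(21)$ because a descent inside $r$ completes a 2143 with the point of $G$ and the last point of $F$. This is the mechanism by which, on passing from type 2 to type 4, the ``$\F$-inflatable'' slot migrates off $r$ and onto the newly present point of $G$, while $r$ itself is demoted to $\Av(21)$.

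The step I expect to demand the most care is sufficiency: confirming that following the combined list of restrictions really does keep us inside $\Av(2143,4231)$, so that the two halves do not interact to produce a forbidden pattern that neither produces alone. Since 2143 and 4231 have length four and the central bands $B\cup\{u\}$, $D$ and $F\cup\{d\}$ are each monotone, any genuinely new occurrence would have to draw points from both the $A$-inflation at the far left and the $G$-inflation at the far right, with at most two monotone interior points in between. I would enumerate these finitely many spanning configurations and rule each one out, the decisive facts being that the inflation of the point of $A$ avoids 312 and 2143, that the inflation of the point of $G$ avoids 231 and 2143, and that the interior material is sandwiched in value between the two extremes. This confirms that the admissible inflations are exactly the union of the left-half and right-half constraints, with no extra restriction forced by their coexistence.
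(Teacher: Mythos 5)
Your proposal is correct and takes essentially the same approach as the paper: the paper disposes of the type 4 case with only the remark that ``a similar argument yields'' the lemma, meaning precisely the point-by-point pattern arguments of the type 1 and type 2 analyses combined with the reverse complement symmetry (which fixes both 2143 and 4231), and that is exactly what you execute, with your individual verifications --- the 2143 forced by a descent in $\ell$ together with the points of $A$ and the first point of $B$, the 4231 forced by an ascent in the first point of $B$ together with the point of $A$ and $r$, and their reverse complements on the right --- all checking out. Your final sufficiency step is only sketched, but the paper asserts sufficiency with even less detail (``there are no further restrictions''), so nothing is missing relative to the paper's own proof.
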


Now that we know the inflations of the simple permutations covered by Lemma \ref{extremal3142} we can obtain analogous results for the simple permutations of Corollary \ref{extremal2413} by taking inverses.

Finally, we observe that the two sporadic permutations $42513$ and $35142$ behave as type 2 or type 3 permutations, and their inflations are the same.  Indeed, $35142$ is visibly a subpermutation of any of the permutations depicted in Figure \ref{fig-3142-case-result} and so its inflations are subpermutations of their inflations.  A similar property holds (by the reverse complement symmetry) for the inflations of $42513$.

It follows from these proofs and the preceding remarks that the class $\Av(2143,4231)$ is equal to the union of the 2 grid classes represented by the picture:


\begin{center}
\psset{xunit=0.006in, yunit=0.006in} \psset{linewidth=0.005in}
\begin{pspicture}(0,0)(560,260)
\multido{\i=0+20}{12}{\psline[linestyle=solid,linewidth=0.005in](\i,0)(\i,260)}
\multido{\i=0+20}{14}{\psline[linestyle=solid,linewidth=0.005in](0,\i)(220,\i)}
\multido{\i=300+20}{14}{\psline[linestyle=solid,linewidth=0.005in](\i,0)(\i,220)}
\multido{\i=0+20}{12}{\psline[linestyle=solid,linewidth=0.005in](300,\i)(560,\i)}
\psline[linestyle=solid,linewidth=0.02in](63,3)(77,17)
\psline[linestyle=solid,linewidth=0.02in](103,23)(117,37)
\psline[linestyle=solid,linewidth=0.02in](163,43)(177,57)
\psline[linestyle=solid,linewidth=0.02in](183,63)(197,77)
\psline[linestyle=solid,linewidth=0.02in](203,103)(217,117)
\psline[linestyle=solid,linewidth=0.02in](3,143)(17,157)
\psline[linestyle=solid,linewidth=0.02in](23,183)(37,197)
\psline[linestyle=solid,linewidth=0.02in](43,203)(57,217)
\psline[linestyle=solid,linewidth=0.02in](103,223)(117,237)
\psline[linestyle=solid,linewidth=0.02in](143,243)(157,257)
\psline[linestyle=solid,linewidth=0.02in](163,77)(177,63)
\psline[linestyle=solid,linewidth=0.02in](103,137)(117,123)
\psline[linestyle=solid,linewidth=0.02in](43,197)(57,183)
\pscircle*(130,90){0.03in}
\pscircle*(90,170){0.03in}
\psline[linestyle=solid,linewidth=0.02in](303,63)(317,77)
\psline[linestyle=solid,linewidth=0.02in](323,103)(337,117)
\psline[linestyle=solid,linewidth=0.02in](343,163)(357,177)
\psline[linestyle=solid,linewidth=0.02in](363,177)(377,163)
\psline[linestyle=solid,linewidth=0.02in](363,183)(377,197)
\psline[linestyle=solid,linewidth=0.02in](403,203)(417,217)
\psline[linestyle=solid,linewidth=0.02in](423,117)(437,103)
\psline[linestyle=solid,linewidth=0.02in](443,3)(457,17)
\psline[linestyle=solid,linewidth=0.02in](483,23)(497,37)
\psline[linestyle=solid,linewidth=0.02in](483,57)(497,43)
\psline[linestyle=solid,linewidth=0.02in](503,43)(517,57)
\psline[linestyle=solid,linewidth=0.02in](523,103)(537,117)
\psline[linestyle=solid,linewidth=0.02in](543,143)(557,157)
\pscircle*(390,130){0.03in}
\pscircle*(470,90){0.03in}

\end{pspicture}
\end{center}

%

%
%
%
%
%
%
%
%
%
%
%
\section{The enumeration of Av(2143,4231)}

We are now able to compute the generating function of the class by using Proposition \ref{CanonicalDecomposition}. This proposition tells us that the class is the disjoint union of the permutation 1, its set of sum decomposable permutations, its skew decomposable permutations, and its inflations of  simple permutations of length at least 4.  Its generating function is therefore the sum of the generating functions of these subsets.

To help in the computation we introduce generating functions for some auxiliary subclasses of $\Av(2143,4231)$.
Let $d(x) = x/(1-x)$ be the generating function for the set of decreasing (or increasing) permutations. Furthermore let $e(x)=\frac{x(1-x)}{1-3x+x^2}$ be the generating function for the set $\E=\Av(312, 2143)$; it is also the generating function for the set $\F=\Av(231, 2143)$ (see e.g. \cite{atkinson:restricted-perm:}).


We will begin by computing $f_\ominus(x)$, the generating function of the skew decomposable permutations of $\Av(2143,4231)$. First note that if we write a skew decomposable permutation $\pi$ as $\pi=\pi_1\ominus\pi_2$, then $\pi_1\in\E$  and $\pi_2\in\F$ (whose generating functions are both $e(x)$). To make this decomposition unique, we insist that $\pi_1$ is skew indecomposable, so we briefly turn our attention to the skew indecomposable permutations in $\Av(312,2143)$, whose generating function we will denote by $e_{\not\ominus}(x)$. The generating function for the  skew decomposable permutations in $\Av(2143,4231)$ will then be given by $f_\ominus(x) = e_{\not\ominus}(x)e(x)$.

Let $\tau$ be an skew indecomposable permutation in $\E = \Av(312,2143)$. As this class contains no simple permutations of length 4 or more, $\tau$ is either the permutation 1 or is sum decomposable. In the latter case, write $\tau=\tau_1\oplus \tau_2$ where $\tau_1$ is  sum indecomposable. If $\tau_1=1$ then $\tau_2$ has no restrictions other than avoiding 312 and 2143, while if $\tau_1$ contains $21$ then $\tau_2$ must be increasing. Recalling that the generating function for the class is $e(x)$, the  skew indecomposable permutations satisfy $e_{\not\ominus}(x)= x + xe(x) + (e(x) - e_{\not\ominus}(x))d(x)$. Solving, yields:

\[ e_{\not\ominus}(x) = \frac{x(1-x)^2}{1-3x+x^2} \]

and so the generating function for the skew decomposables in $\Av(2143,4231)$ is 
\[f_\ominus(x) = \frac{x^2(1-x)^3}{(1-3x+x^2)^2}.\]

We now turn our attention to the  sum decomposable permutations. Consider $\pi=\pi_1\oplus \pi_2$, written so that $\pi_1$ is  sum indecomposable. If $\pi_1=1$, then $\pi_2$ can be any element of the class $\Av(2143,4231)$. Otherwise, $\pi_1$ must contain $21$, so $\pi_2$ must be increasing to avoid 2143.  Thus the generating function $f_{\oplus}(x)$ for the sum decomposable permutations satisfies $f_{\oplus}(x) = xf(x) + (f(x)- f_\oplus(x)-x)d(x)$.

It remains to consider the inflations of the simple permutations of length 4 or more.  As in the previous section we divide the analysis according to the pattern determined by the 4 extremal points and we begin with the case that this pattern is 3142.  We follow our previous analysis of permutation types by defining  the generating functions for the simple permutations of types 1---4 to be $s_1(x), s_2(x), s_3(x)$ and $s_4(x)$ respectively. Note that $s_2(x)=s_3(x)$.

Reading from left to right in the permutation, we encode points of a simple permutation lying in the grid class $\D$ using the three letters $a$, $b$ and $c$: $a$ represents a point in the lowest cell, $b$ a point in the middle and $c$ a point in the top. To enforce uniqueness, we insist that if a point can be encoded by $b$ then it should be. For example, the encoding of $51647283$ is $bacbcacb$. For the permutation to be simple, the word must not contain any factors $aa$, $bb$ or $cc$ (as otherwise these two points will form an interval of size 2), and additionally for type 1 simple permutations the encoding must start with $ba$ and end with $cb$. (Note that for types 2 and 3, we will drop one of these end conditions, while for type 4 we drop both end conditions.)

\paragraph{Type 1 enumeration.} For $n\geq 4$, these permutations are in bijection with words of length $n$ over $\{a,b,c\}$ of the form $ba\dots cb$, and with no factor $aa$, $bb$ or $cc$. There is one word of length 4, namely $bacb$, and one of length 5, $babcb$. Consider a word $w=w_1\dots w_n$ of length $n\geq 6$. If the fourth symbol from the right, $w_{n-3}= a$ or $b$, then the word $w_1\dots w_{n-3}w_{n-1}w_n$ is a valid word of length $n-1$. On the other hand, if $w_{n-3}=c$, then the word $w_1\dots w_{n-4}w_{n-1}w_n$ is a valid word of length $n-2$, where the other omitted symbol $w_{n-2}$ could have been either $a$ or $b$. Hence we obtain the recurrence $s_{1,n} = s_{1,n-1} + 2s_{1,n-2}$, where $s_{1,n}$ denotes the coefficient of $x^n$ in $s_1(x)$. Solving, yields the generating function \[s_1(x) = \frac{x^4}{(1-2x)(1+x)}.\]
Thus the generating function for the number of permutations in $\Av(2143,4231)$ that are inflations of type 1 simple permutations is
\[ f_1(x) = s_1(d(x)) \cdot \frac{e(x)^2}{d(x)^2} = \frac{x^4(1-x)^2}{(1-3x)(1-3x+x^2)^2}.\]

\paragraph{Type 2 enumeration.} Here, we require the four leftmost points of $\sigma$ to take the form of the four leftmost points as depicted in Figure~\ref{fig-3142-case-result}. The remainder of the permutation lies in the grid class $\D$. Thus we enumerate a permutation of length $n$ by considering words of length $n-4$ over $a,b,c$. As before these have no repeated letters as factors, but note that we can now drop the condition that the word starts with $ba$ as the four points placed at the left of the permutation guarantee that no interval can be found here. Thus, for $n\geq 6$ the number $b_n$ of type 2 simple permutations of length $n$ is given by the number of words of length $n-4$ of the form $\dots cb$ with no repeated letters as factors. Reading the word from right to left, this is easily seen to give $2^{n-6}$ choices for $n\geq 6$, giving the generating function $s_2(x)=\frac{x^6}{(1-2x)}$. Consequently, the generating functions for permutations that are inflations of type 2 simple permutations is
\[ f_2(x) = s_2(d(x))\cdot \frac{x\cdot e(x)^2}{d(x)^3} = \frac{x^6}{(1-3x)(1-3x+x^2)^2}.\]

\paragraph{Type 3 enumeration.} Using the reverse complement symmetry, we immediately obtain $s_3(x)=s_2(x)$ and $f_3(x)=f_2(x)$.

\paragraph{Type 4 enumeration.} Here, the first four and last four points of the permutation must be fixed as shown in Figure~\ref{fig-3142-case-result}. We encode the intermediate points as before, but note now that we have no end restrictions at either end. Thus there is one of length 8, while for $n\geq 9$ there are $3\cdot 2^{n-9}$ possible choices, and hence we obtain the generating function $s_4(x)=\frac{x^8(1+x)}{1-2x}$. The generating function for the inflations of these simple permutations is then
\[ f_4(x) = s_4(d(x)) \cdot \frac{x^2 \cdot e(x)^2}{d(x)^4} = \frac{x^8}{(1-3x)(1-x)^2(1-3x+x^2)^2}.\]

Because of the inversion symmetry we obtain exactly the same generating functions when the 4 extremal points have the pattern 2413.

To complete the enumeration we have to consider the permutations $42513$ and $35142$ (when the 4 extremal points have the pattern 3412). They behave like types 2 and 3 respectively and so we find the generating function for inflations of these two sporadic simple permutations to be $2\cdot x \cdot e(x)^2 \cdot d(x)^2= \frac{2x^5}{(1-3x+x^2)^2}$.

This analysis into cases is obviously without overlaps. Thus the generating function $s(x)$ for the simple permutations of length 4 or more in $\Av(2143,4231)$ is $s(x)=2x^5+2(s_1(x)+s_2(x)+s_3(x)+s_4(x))$ and so is given by:
\[s(x) = \frac{2x^4(1+x+x^2+x^4+2x^5+x^6)}{(1-2x)(1+x)}\]

The first few terms of the sequence (starting at $n=4$) are 2, 4, 10, 18, 40, 80, 162.

Likewise, the generating function for the entire class $\Av(2143,4231)$ is:

\[f(x) = x + f_\oplus(x) + f_\ominus(x) + \frac{2x^5}{(1-3x+x^2)^2} + 2(f_1(x)+f_2(x)+f_3(x)+f_4(x))
\]
We have explicit formulae for every term on the right-hand side of this equation except for the term $f_\oplus(x)$.  However, for this term we have an equation that relates it to $f(x)$ itself and solving the resulting equations gives
\[f(x)=\frac{x-11x^2+51x^3-127x^4+186x^5-165x^6+87x^7-23x^8+3x^9}{(1-3x)(1-x)^4(1-3x+x^2)^2}
\]

%
%
%
%
%
%
%
%
%
%
%
\section{Some related pattern classes}

The pattern class we have considered in this paper is the first class of the form $\Av(\alpha,\beta)$ with $|\alpha|=|\beta|=4$ that has been analysed by means of grid classes.  We expect that  grid classes will play an increasingly important role in the study of pattern classes.  A forthcoming paper \cite{albert:forest-and-geometric} gives some very general conditions under which a grid class can be defined by finitely many forbidden permutations, has a rational generating function, and is partially well-ordered.  Furthermore there is already a useful criterion \cite{huczynska:grid-classes-an:} for a given pattern class to be contained within a grid class.  For example, this criterion applies to $\Av(2143, 4321)$,  $\Av(2143,4312)$ and $\Av(1324, 4312)$, all hitherto unenumerated.  We expect to report on these pattern classes in a future paper.

\bibliographystyle{acm}
\bibliography{refs}

\end{document}